\font \Bbbten=msbm10 \font \Bbbsev=msbm7 \font \Bbbfiv=msbm5
\newcommand{\N}{\mbox{$I\!\!N$}}
\newcommand{\Z}{\mbox{$Z\!\!\!Z$}}
\newcommand{\R}{\mbox{$I\!\!R$}}
\newcommand{\cO}{\mathcal{O}}
\newcommand{\dist}{{\rm dist}}
\newcommand{\cita}[7]{{\sc #1, }{\it #2, }{\small #3, {\bf #4 } (#5), p.
#6-#7.}}
\newcommand{\cit}[5]{{\sc #1, }{\it #2, }{\small #3, { #4 } #5.}}
\theoremstyle{plain}
\newtheorem{thm}{Theorem}[section]
\newtheorem{maintheorem}{Theorem}
\newtheorem{Df}{Definition}[section]
\newtheorem{Teo}{Theorem}[section]
\newtheorem{Lem}[Teo]{Lemma}
\newtheorem{Prop}[Teo]{Proposition}
\newtheorem{Obs}[Teo]{Remark}
\title{Lyapunov exponents,
expansiveness and entropy \\ for set-valued maps}
\author{
M. J. Pacifico\footnote{partially supported by
 was partially supported by CAPES -- Finance Code 001, CNPq grant 302565/2017-5 and FAPERJ grant CNE 239069},
J. L. Vieitez\footnote{partially supported by Grupo de Investigaci\'on "Sistemas Din\'amicos" CSIC (Universidad de la
Rep\'ublica), SNI-ANII, PEDECIBA, Uruguay}}
\date{\today}
\begin{document}
\maketitle

\begin{abstract}
In this paper, we define Lyapunov exponents for continuous set-valued maps defined on a Peano space, give a notion of expansiveness for a set-valued map $F:X \multimap X$ defined on a topological space $X$ different from that given by Richard Williams, and prove that the topological entropy of an expansive set-valued map defined on a Peano space of positive dimension is strictly positive.
We define the Lyapunov exponent for set-valued maps and prove that the positiveness of its Lyapunov exponent implies positiveness for the topological entropy.
\end{abstract}

\tableofcontents

\section{Introduction}
In this paper, we introduce a notion of expansiveness for set-valued maps different from that given by Richard Williams,  {and} define Lyapunov exponents for set-valued maps. 
We prove that expansive set-valued maps defined on non trivial Peano spaces have positive entropy and
 if a set-valued map has positive Lyapunov exponents then its entropy is also strictly positive.
Set-valued maps, also called multivalued functions or maps, (see Section \ref{setvalued} for precise definitions), appear naturally in Control Systems Theory. Starting from a state $x\in X$ at a given time $t_0$, at a time $t>t_0$ there will be a subset of the phase space $X$ of possible states that the given system can attain 
{at} time $t$, see for instance \cite{Ro, Ba}. Also in Mathematical Economics, there are several applications of set-valued maps (see for instance \cite{Au}, where "multivalued maps" are called "correspondences"). Another source of set-valued maps is given by differential equations or differential inclusions where there is no uniqueness of solutions as, for instance, it occurs in the differential equation given by $\dot x =x^{2/3}$, $x\in\R$, for the initial value $x(t_0)=0$, $t_0\in\R$. Moreover, when we have a non invertible onto map $f:X\to X$, the inverse relation $f^{-1}:X\to 2^X$ given by $f^{-1}(y)=\{x\in X\, :\, f(x)=y\}$ gives a particular example of a set-valued map.
Even if $f:M\to M$ is a homeomorphism and we simulate numerically the dynamics given by $f$, a point $x$ usually is known only approximately due, for instance, to experimental errors. Hence,  instead of capturing information at the exacted point $x$, we capture it at a point $\widetilde x$, 
which adds some error to $x$. Moreover, when we use computer devices, $\widetilde x$ is changed by $fl(\widetilde x)$ where $fl(\widetilde x)$ is the floating point representation of $\widetilde x$ (more precisely the floating point representation of the coordinates of $\widetilde x$). Hence the true orbit $\cdots f^{-1}(x)\mapsto x\mapsto f(x)\mapsto f^2(x)\cdots$ is replaced by some kind of pseudo-orbit.
Thus, since the single value $f(x)$ is only known in a fuzzy way, it seems natural to replace it with a set $F(x)$ leading to a set-valued map.

Recall that in the case of a diffeomorphism $f:M\to M$ defined on a
compact manifold $M$
the Lyapunov exponent at a point $x\in M$ in the direction $v\in T_x M$
is given by $\chi(x,v)= \limsup_{n\to\infty}\frac{1}{n}\log(\|Df^n_x(v)\|)$.
We can interpret $v$ as the ''error'' $v=\exp_x^{-1}(\widetilde{x})$
via the inverse of the exponential map $\exp_x:T_xM\to M$ and $Df_x(v)$ as the linear approximation of the error $\exp_x^{-1}(f(\widetilde{x}))-\exp_x^{-1}(f(x))$. By forward iteration by $Df$, we try to capture the divergence between the trajectories. A positive value of $\chi(x,v)$ implies exponential divergence of trajectories.
Thus, the positiveness of the Lyapunov exponents
indicates the existence of chaos, that is, the long term behavior
of the system is unpredictable from the initial data.

 In \cite{PV}  we have addressed the problem of defining Lyapunov exponents for an expansive homeomorphism $f$ on a compact metric space $(X,\dist)$ using similar techniques as those developed in \cite{BS,Kif}. Under certain conditions about the topology of the space $X$ where $f$ is acting (mainly we require $X$ to be a non trivial Peano space), we obtain that the Lyapunov exponents are different from zero, an indication of chaos.

\medbreak

In this paper,
we first introduce a notion of expansiveness for set-valued maps different from that given by Richard Williams, \cite{Wi, Wi2}, see
Definition \ref{expan}, and prove the following result

\begin{maintheorem}\label{t-expansive-entropy-positive}
Let $M$ be a compact connected manifold and
 $F:M\multimap M$ be an expansive set-valued map such that $F(x)$ is compact and connected for all $x\in M$.
Then $h_{top}(F)>0$.
\end{maintheorem}

Note that given a set-valued map  $F:M\multimap M$ and $x\in M$, the subset $F(x)$ can be a finite or countable subset or  a Cantor set, for instance, etc. So we cannot expect to have that if $\gamma$ is connected then $F(\gamma)$ is connected without any other condition. This leads us to give  direct
 proof that if $M$ is a compact manifold and $F:M\multimap M$ is a continuous expansive
set-valued map satisfying some additional hypothesis then $h_{top}(F)>0$, covering the case when $F(x)$ is not connected:

Let $C(M)$ be the family of compact subsets of $M$ and define
$$\mathcal{D}(M)=\{K\in C(M): \exists \,\, x\in M \mbox{and} \,\, \exists \,\, n\in\Z\,\,\, \mbox{such that}\,\,  F^n(x)=K\}.$$
\begin{maintheorem} \label{htop>0}
Let $M$ be a compact connected manifold of 
dimension greater than zero and
 $F:M\multimap M$ be a continuous positive expansive set-valued map with $F(x)$ closed for all $x\in M$.
If $\mathcal{D}(M)$ is closed then $h_{top}(F)>0$.
\end{maintheorem}

We also prove that RW-expansive set-valued maps have positive topological entropy 
{without adding any extra hypothesis}.

\begin{maintheorem} \label{htopRW}
Let $M$ be a compact manifold of positive dimension and
 $F:M\multimap M$ be a continuous RW-expansive set-valued map.
Then $h_{top}(F)>0$.
\end{maintheorem}

Then we prove a result, Theorem \ref{t-multi}, that allows us to
define Lyapunov exponents for set-valued maps, denoted by $\chi^\pm(x)$, and
prove the following result:

\begin{maintheorem}\label{Lyapexpo positivo}
Let $M$ be a compact manifold and $F:M\multimap M$ a set-valued map such that there is a non trivial continuum $\gamma\subset M$ such that $\chi^+(x)>0$ for all $x\in\gamma$. Then the topological entropy of $F$ is positive.
\end{maintheorem}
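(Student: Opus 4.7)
The plan is to use the positive Lyapunov exponent on $\gamma$ to exhibit, for each sufficiently large $n$, exponentially many points of $\gamma$ which are pairwise $(n,\varepsilon)$-separated in the Bowen--Dinaburg sense adapted to the set-valued setting, i.e.\ $x\neq y$ are separated if $d_H(F^k(x),F^k(y))>\varepsilon$ for some $0\le k<n$. Exhibiting $\sim e^{\lambda n}$ such points then yields $h_{\mathrm{top}}(F)\ge\lambda>0$.

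The first step is a \emph{stretching lemma}. Combining the Lipschitz--Hausdorff continuity of $F$ with the hypothesis that the maximum Lyapunov exponent at $\gamma$ is at least some $\lambda>0$, I expect to extract a saturation scale $\alpha_0>0$ and a sequence $n_k\to\infty$ such that every sub-continuum $\gamma'\subset\gamma$ of sufficiently small diameter $\delta$ satisfies
\[
\sup_{x,y\in\gamma'}d_H\bigl(F^{n_k}(x),F^{n_k}(y)\bigr)\ \ge\ \min\bigl\{\alpha_0,\ \delta\,e^{\lambda n_k}\bigr\}.
\]
The second step is a \emph{covering--selection} argument. By standard continuum theory, $\gamma$ admits, for each $k$, a finite cover by sub-continua $\gamma_1,\dots,\gamma_{N_k}$ of diameter at most $\alpha_0 e^{-\lambda n_k}$, with $N_k\le C\,e^{\lambda n_k}$ for some $C>0$ independent of $k$. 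The stretching lemma has already saturated at scale $\alpha_0$ on each $\gamma_i$, so there exist $a_i,b_i\in\gamma_i$ with $d_H(F^{n_k}(a_i),F^{n_k}(b_i))\ge \alpha_0/2$. Iterating this pair-selection on nested sub-continua (a dyadic Cantor-like scheme, using the connectedness of each sub-piece and Hausdorff-continuity of $x\mapsto F^{n_k}(x)$) produces a collection of $\sim e^{\lambda n_k}$ points of $\gamma$ that are pairwise $(n_k,\varepsilon)$-separated for some $\varepsilon>0$ depending only on $\alpha_0$. Hence the separated-set count satisfies $s_{n_k}(\varepsilon)\gtrsim e^{\lambda n_k}$ and $h_{\mathrm{top}}(F)\ge\lambda>0$.

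The main obstacle is the stretching lemma of Step 1: the maximum Lyapunov exponent at $\gamma$ is defined (in the spirit of \cite{BS,Kif,PV}) as an asymptotic $\limsup$ of an infinitesimal expansion rate, whereas the argument needs a uniform lower bound on the stretching of actual sub-continua of $\gamma$ along a single subsequence $n_k$. The natural tool is a Pliss-type selection of ``hyperbolic times'' along $\gamma$, coupled with the Lipschitz--Hausdorff modulus of $F$ to upgrade infinitesimal stretching at a point to macroscopic stretching of nearby small sub-continua before the $\alpha_0$-saturation. A secondary difficulty is that a sub-continuum of a Peano space need not itself be Peano (e.g.\ a Warsaw circle inside a disc), so the small-diameter decomposition in Step 2 must rely on general continuum-theoretic chaining tools rather than the stronger connected-open-cover properties of Peano spaces.
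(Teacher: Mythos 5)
There is a genuine gap, and it sits exactly where you flag it, plus one more place you do not flag. Your Step 1 ``stretching lemma'' is not a consequence of the Lyapunov exponent as defined in this paper. Here $\chi^+(x)$ is built from $H_\delta(x,n)$, which is a \emph{supremum}, over compact sets $A$ staying $\delta$-close to the orbit of $\{x\}$ for $n$ steps, of the ratio $d_H(F^n(x),F^n(A))/d_H(\{x\},A)$. Largeness of this supremum asserts only that \emph{some} nearby compact set (not necessarily a singleton, and not necessarily contained in $\gamma$) separates exponentially from $F^n(x)$; it gives no lower bound for $\sup_{x,y\in\gamma'}d_H(F^{n}(x),F^{n}(y))$ over an arbitrary small sub-continuum $\gamma'$, and a fortiori no such bound holding uniformly over all sub-continua of $\gamma$ at one common time $n_k$: the convergence in Theorem \ref{multi} is pointwise ($\mu$-a.e.), so the times at which $\frac1n\log H_\delta(x,n)$ exceeds $\lambda$ depend on $x$, and a Pliss-type selection of hyperbolic times addresses neither the sup-versus-all-pairs issue nor the fact that the witnesses need not lie on $\gamma$. (The paper's proof also passes from $H_\delta(x,n)\ge e^{sn}$ to a stretched pair of points of $\gamma$, but it needs this only for finitely many pairs at each stage of its construction, not uniformly over a cover of $\gamma$.)

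Even granting your stretching lemma as stated, Step 2 does not give the claimed count. Knowing that each of the $N_k\approx e^{\lambda n_k}$ pieces $\gamma_i$ contains a pair $a_i,b_i$ with $d_H(F^{n_k}(a_i),F^{n_k}(b_i))\ge\alpha_0/2$ produces separation \emph{inside} each piece; pairwise $(n_k,\varepsilon)$-separation of $\approx e^{\lambda n_k}$ points requires separation \emph{across} distinct pieces, and nothing excludes that two different $\gamma_i,\gamma_j$ have nearly identical images under $F^m$ for all $m\le n_k$. This is precisely why the paper settles for the weaker conclusion: its proof of Theorem \ref{Lyapexpo positivo} uses the exponent only to bound the waiting time for a pair at distance $\ge\delta$ to reach Hausdorff separation $4\delta_0$, and then runs the renaming/subdivision (dyadic) scheme of Theorem \ref{htop>0} along $\gamma$, obtaining $2^k$ points that are $(n,4\delta_0)$-separated with $n$ growing linearly in $k$, hence $h_{top}(F)>0$ --- not $h_{top}(F)\ge\lambda$. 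Your claimed bound $s_{n_k}(\varepsilon)\ge C\,e^{\lambda n_k}$ is a Ruelle-inequality-type statement that would need uniform all-pair expansion (or a bounded-distortion/horseshoe construction) which the hypotheses --- Lipschitz--Hausdorff continuity plus a pointwise exponent on a continuum --- do not provide and your proposal does not construct. To repair it along the paper's lines, drop the exponential counting at a single time and instead use the exponent to control the successive re-expansion times in the dyadic scheme.
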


This paper is organized as follows.
In Section \ref{setvalued} we set up notations and announce results proved elsewhere.
 In Section \ref{s-expansive}
 we introduce a notion of expansiveness in the context of  continuous as well as only upper semi continuous set-valued maps, give examples, and compare our definition with the one given in \cite{Wi}.
  In Section \ref{s7}  we prove Theorems \ref{t-expansive-entropy-positive}, \ref{htop>0}  and  \ref{htopRW}.
 In Sections \ref{s-Invariant-measure} and \ref{s-entropia-Lyapunov-exponent}
we introduce   measurable set-valued maps, and  invariant measures, and prove an auxiliary result, Theorem \ref{t-multi}, to
define Lyapunov exponents for set-valued maps and prove Theorem \ref{Lyapexpo positivo}.

\section{Set-valued maps} \label{setvalued}
In this section, we formalize definitions, set up the notations,
announce results proved elsewhere and describe basic facts about set-valued maps.

\begin{Df}
Let $X$ and $Y$ be two Hausdorff topological spaces and assume that for every point $x\in X$ a nonempty
closed subset $F(x)$ of $Y$ is given;
in this case, we say that $F$ is a set-valued map from $X$ to $Y$ and write\footnote{We will use the notation $f:X\to Y$ when referring to a single-valued map.}
$F:X \multimap Y$.
More precisely a set-valued map $F:X \multimap Y$ can be defined as a (closed) subset\footnote{We will require {for} every $x\in X$, $F(x)$ {to be} closed.}
$F\subset X\times Y$ such that the following condition is satisfied:
for all $x\in X$ there exists $y \in Y$ such that $(x,y) \in F$, i.e., $\forall\, x\in M: F(x)\neq\emptyset$.
\end{Df}

A set-valued map $F$ as defined above may be thought of as a single-valued map from $X$ to $2^Y$ or from $X$ to $C(Y)$ where $C(Y)$ is the family of closed subsets of $Y$. But, as is pointed out in \cite[page 6]{AF}, ``when we regard a set-valued map as a single-valued map from one set to the power set of the other
(supplied with any one of the topologies we can think of), we arrive
at continuity concepts which are stronger than both lower and
upper semicontinuity, introducing parasitic artifacts''.

\medbreak

If $F:X\multimap Y$ and $G:Y\multimap Z$ are two set-valued
maps, then the composition $G\circ F:X\multimap Z$ of $F$ and $G$ is defined for every $x\in X$ by:
$$(G\circ F)(x)=\bigcup \big\{G(y)\,:\, y\in F(x)\big\}\, .$$

We define for $B\subset Y$ the set $F^{+}(B)=\{x\in X\,:\, F(x)\subset B\}$ and $F^{-}(B)=\{x\in X\,:\, F(x)\cap B\neq\emptyset\}$.
In particular $F^{-}(y)=F^{-}(\{y\})=\{x\in X\,:\, F(x)\ni y\}$ is a natural candidate to be the inverse $F^{-1}$ in the case of set-valued maps and we shall write $F^{-1}(C)=F^{-}(C)$, $F^{-2}(C)=F^{-}\circ F^{-1}(C)$ and so on.
Observe that in the case of $f:X\to Y$ single-valued $F^{-}(y)=F^{+}(y)=f^{-1}(y)$ for every $y\in Y$.
The set $F^{+}(B)$ is called the small counter-image of $B$ and $F^{-}(B)$ the large
counter-image of $B$. Clearly $F^{+}(B)\subset F^{-}(B)$.

\begin{Df}
 A set-valued map $F:X\multimap Y$ is {\em upper semicontinuous}
(u.s.c.) provided for every open $U\subset Y$ the set $F^{+}(U)$ is open in $X$.
\end{Df}

Next, we collect some results from \cite{Gor} that we will use in the remaining { of this} paper.

\begin{Prop}
If $F:X\multimap Y$ is u.s.c. then the graph $\Gamma_F=\{(x,y)\in X\times Y\,:\, y\in F(x)\}$ is a closed subset of $X\times Y$.
\end{Prop}
\begin{proof}
See \cite[Proposition 14.4]{Gor}.
\end{proof}

\begin{Prop}
Let $F:X\multimap Y$ and $G:Y\multimap Z$ be u.s.c.. Then $G\circ F:X\multimap Z$ is upper semicontinuous too.
\end{Prop}
\begin{proof}
See \cite[Proposition 14.8]{Gor}.
\end{proof}

\begin{Df}
A set-valued map $F:X\multimap Y$ is {\em lower semicontinuous}
(l.s.c.) provided for every open $U\subset Y$ the set $F^{-}(U)$ is open in $X$.
\end{Df}

\begin{Prop}
A map $F:X\multimap Y$ is l.s.c. if and only if for every closed
set $C \subset Y$ the set $F^{+}(C)$ is a closed subset of $X$.

A map $F:X\multimap Y$ is u.s.c. if and only if for every closed
set $C \subset Y$ the set $F^{-}(C)$ is a closed subset of $X$.

\end{Prop}
\begin{proof}
See \cite[Proposition 15.3]{Gor}.
\end{proof}

A set-valued map $F$  is {\em continuous} if it is both lower and upper semicontinuous.

\noindent Given a subset $A\neq\emptyset$ of the metric space $(X,\dist)$ let $B(A,\varepsilon)=\{x\in X\,:\, \dist(x,A)\leq\varepsilon\}$.
Recall that the Hausdorff distance between compact subsets $A$ and $C$ of
$(X,\dist)$ is defined as
$$\dist_H(A,C)=\min\{\varepsilon>0\,:\, A\subset B(C, \varepsilon)\;\mbox{ and } C\subset B(A,\varepsilon)\}\, .$$

Let $(X,\dist)$ be a complete metric space and $(Y,d)$ a compact metric space. Let $C(Y)$ be the space of non-empty closed (hence compact) subsets of $Y$.

\begin{Df}
Let $F:X\multimap Y$ such that $X,Y$ are complete metric spaces and $x \in X$ such that $F(x)$ is closed on $Y$. We say that $F$ is Hausdorff-continuous iff given $\epsilon>0$ there is $\delta>0$ such that if $\dist(x,y)<\delta$ then $\dist_H(F(x),F(y))<\epsilon$.
\end{Df}

\begin{Prop} \label{cont=Hcont}
Let $M$ be a compact metric space and $F:M\multimap M$ an onto set-valued map. Then $F$ is Hausdorff-continuous if and only if $F$ is continuous.
\end{Prop}
\begin{proof}
Suppose first that $F$ is Hausdorff-continuous and let $V$ be an open subset of $M$. Let us prove that $F^{+}(V)$ is open.
Let $x\in F^{+}(V)$ then $F(x)\subset V$. Since $F(x)$ is closed and $M$ is compact $F(x)$ is compact. Since $V$ is open by compactness there is $\epsilon>0$ such that $B(F(x),\epsilon)\subset V$.
Since $F$ is Hausdorff-continuous there is $\delta>0$ such that if $\dist(x,x')<\delta$ then $\dist_H(F(x),F(x'))<\epsilon$. Hence $F(x')\subset B(F(x),\epsilon)\subset V$ and $F$ is u.s.c. .
Let us prove that $F$ is l.s.c., i.e., $F^{-}(V)$ is open. Let $x\in F^{-}(V)$ then $F(x)\cap V\neq \emptyset$. If $y\in F(x)\cap V$, since $V$ is open there is $\epsilon>0$ such that
$B(y,\epsilon)\subset V$. For such a $\epsilon$ since $F$ is Hausdorff-continuous we may find $\delta>0$ such that if $\dist(x,x')<\delta$ then $\dist_H(F(x),F(x'))<\epsilon$. Therefore $F(x)\subset B(F(x'),\epsilon)$ and by the  compactness of $F(x')$,
there is a point $y'\in F(x')$ such that $\dist(y,y')<\epsilon$. Hence $y'\in B(y,\epsilon)$ so that $F(x')\cap V\neq\emptyset$ proves that $F^{-}(V)$ is open.

Suppose now that $F$ is simultaneously u.s.c. and l.s.c. and let us prove that $F$ is Hausdorff-continuous.
Let $\epsilon>0$ and $x\in M$. Let $V=B(F(x),\epsilon)$ Then $F^{+}(V)$ is open and $F^{-}(V)$ is open too since $F$
is simultaneously u.s.c. and l.s.c. .
Let $U=F^{+}(V)\cap F^{-}(V)=F^{+}(V)$ then $U$ is open and clearly $x\in U$ and if $x'\in U$ then $F(x')\subset B(F(x),\epsilon)$.
To prove Hausdorff-continuity of $F$ we need to prove also that we have that
there is $\delta>0$ such that if $x'\in B(x,\delta)$ then $F(x)\subset B(F(x'),\epsilon)$. Since $F(x)$ is compact we may cover it by a finite number
of balls $B(y_j,\epsilon/2)$, $j=1,\ldots, n$ in such a way that if $y\in F(x)$ there is $y_j$ such that $\dist(y_j,y)<\epsilon/2$.
Since $F$ is l.s.c. there is $\delta_j>0$ such that if $x'\in B(x,\delta_j)$ then $F(x')\cap B(y_j,\epsilon/2)\neq\emptyset$. Let
$\delta=\min\{\delta_1,\ldots, \delta_n\}>0$. If $x'\in B(x,\delta)$ then $F(x')\cap B(y_j,\epsilon/2)\neq\emptyset$ for all $j=1,\ldots, n$.
Hence if $y\in F(x)$ there is $y_j\in F(x)$ such that $\dist(y,y_j)<\epsilon/2$ and there is $y'\in F(x')$ such that $\dist(y',y_j)<\epsilon/2$.
Therefore whenever $x'\in B(x,\delta)$ for all $y\in F(x)$ there is $y'\in F(x')$ such that $\dist(y,y')<\epsilon$. Thus $F(x)\in B(F(x'),\epsilon)$ and
$F$ is Hausdorff-continuous.
This finishes the proof.
\end{proof}

Because of Proposition \ref{cont=Hcont}, from now on we will say $F$ is continuous avoiding to say Hausdorff-continuous.

\section{Expansive multivalued maps}\label{s-expansive}
In this section, we introduce a notion of expansiveness in the context of upper semi continuous set-valued maps. 

\begin{Df} \label{suborbit}
 Let $(X,d)$ be a compact metric space and $F:X\multimap X$ an u.s.c. set-valued map.
A $F$-suborbit of $x$ is a set of the form
$$\{x_j\, : \, x_0=x, x_{j+1}\in F(x_j)\mbox{ for each } j\in\Z\}.$$
\end{Df}
Richard Williams \cite{Wi} proposed the following definition of expansiveness that we shall denote RW-expansiveness.

\begin{Df}\label{expRW}
A set-valued map $F: X \multimap X$ is RW-expansive on $X$ with expansive constant $\delta>0$ if $x,y\in X$, $x\neq y$ implies that for each $F$-suborbit $A$ of $x$ and  each $F$-suborbit $B$
of $y$, there exist $x_n\in A$, $y_n\in B$ such that $d(x_n,y_n)>\delta$.
\end{Df}

  Observe that according to this definition, every pair of distinct $F$-suborbits have to separate at a certain time $n\in \Z$
  and  if $0<\dist(x,y)<\delta$ then either $F(x)\cap F(y)=\emptyset$ or $F^{-1}(x)\cap F^{-1}(y)=\emptyset$, a sort of local injectivity of $F$ or of $F^{-1}$.
Indeed,  if $\dist(x,y)<\delta$ and $F(x)\cap F(y)\neq\emptyset$ and $F^{-1}(x)\cap F^{-1}(y)\neq\emptyset$ then we may find $z\in F^{-1}(x)\cap F^{-1}(y), $
  $w\in F(x)\cap F(y)$ and there would exist $F$-suborbits $\{\ldots, z_{-2},z_{-1}=z,x,w=w_1,w_2,\ldots\}$ and $\{\ldots, z_{-2},z_{-1}=z,y,w=w_1,w_2,\ldots\}$ which contradicts RW-expansiveness.

\vspace{0,2cm}

In the following, we propose a new definition of expansiveness for set-valued maps taking into account the separation of $F^n(x)$ and $F^n(y)$ in the Hausdorff distance.
To be more precise, let $M$ be a compact
metric space equipped with the distance $\dist:M\times M\to \R^+$ and $\dist_H$ the corresponding Hausdorff distance between compact subsets of $M$.

We will define the inverse of $F$ by the large counter image $F^{-1}(y)=F^{-}(y)=F^{-}(\{y\})=\{x\in X\,:\, F(x)\ni y\}$,
 and we also set $F^{-n}(y)=(F^{-1})^n(y)$, $n\geq 0$.

\begin{Df} \label{expan}
A continuous and surjective set-valued map $F:M\multimap M$ such that $F(x)$ is closed for all $x\in M$ is {\em expansive} if there is $\alpha>0$ such that if $x,y\in M$, $x\neq y$,
then $\dist_H(F^n(x),F^n(y))>\alpha $ for some $n\in\Z$ {or $F^n(x)=F^n(y)$ for every $n\in\Z\backslash\{0\}$}. If we require that if $x\neq y$ then $\dist_H(F^n(x),F^n(y))>\alpha $ for some $n\in\N$ {or $F^n(x)=F^n(y)$ for every $n\geq 1$} we say that $F$ is {\em positively expansive}.

\end{Df}

The next result guarantees that this definition is well-posed.

\begin{Prop} \label{compactos}
Let $C(M)$ be the family of compact subsets of $M$ and $F:M\multimap M$ an upper semi-continuous set-valued map with $F(x)\in C(M)$.
If $K\in C(M)$ then $F(K)\in C(M)$ and $F^{-1}(K)\in C(M)$.
\end{Prop}
\begin{proof}
Let $\{V_j,\, j\in J\}$ be an open covering of $F(K)$, where $J$ is a family of indexes. Hence, since $F(x)$ is compact, for every $x\in K$ there is a finite subcover $V_{j_1}(x),\ldots, V_{J_{n_x}}(x)$ covering $F(x)$. Let $U_x=\bigcup_{j_x=j_1}^{j_x=j_{n_x}} V_{j_x}$, hence $U_x$ is an open set covering $F(x)$, $x\in K$. Thus $\bigcup_{x\in K}U_x$ covers $F(K)$. Since $F$ is  upper semi continuous $F^-(U_x)$ is open and $\bigcup_{x}F^{-}(U_x)$ is an open covering of $K$.
By compactness of $K$ there is a
finite subcover of $K$ from $\bigcup_{x\in K}F^{-}(U_x)$, say, $\{F^{-}(U_{x_1}), \ldots, F^{-}(U_{x_n})\}$. Thus $\{U_{x_1},\ldots, U_{x_n}\}$ covers $F(K)$.
Since every $U_{x_i}, \, i=1,\ldots, n$ is the union of a finite number of sets $V_j$ we conclude that $F(K)$ is covered by a finite family from $V_j,\, j\in J$. Hence $F(K)$ is compact finishing the proof for $F$.

Now let us show that $F^{-1}(K)\in C(M)$. It suffices to show that $F^{-1}(K)$ is closed. Let $\{y_j\}_{j\in\N}\subset F^{-1}(K)$ and assume $y_j\to y$ when $j\to +\infty$.
It holds that $y_j\in F^{-1}(K)$ iff $F(y_j)\cap K \neq \emptyset$. Let $z_j\in F(y_j)\cap K$ and consider a convergent subsequence $z_{\ell_j}$, say to $z$. So $z_{\ell_j}\to z$ and since $K$ is compact we have $z\in K$. Therefore $F^{-1}(K)\supset F^{-1}(z)\ni y$ finishing the proof.
\end{proof}

\subsection{Examples.}

Let us give some examples of set-valued  expansive maps in the different senses.
\begin{itemize}
\item
Let $M=[0,1]$ and for $z\in [0,1]$ define $F(z)=\{2z\mod (1),(2z+1/2)\mod(1)\}$. Then $F^2(z)=\{4z\mod(1),(4z +1/2)\mod(1)\}$, etc., and $F$ is both expansive and RW-expansive.
(Recall that there are no expansive single valued maps on $[0,1]$, see \cite{Wi2}.

\item
Let $M=S^1$ and for $z\in S^1$ define $F(z)=\{z^2,-z^2\}$. Then $F^2(z)=\{z^4,-z^4\}$, etc. and $F$ is expansive and RW-expansive. We remark that there are no expansive homeomorphisms on $S^1$, see \cite{JU}.

\item
 Let $M=T^2$ that we identify with $[0,1]^2 \mod 1$ and let $F:T^2\multimap T^2$ be defined as $$(x,y)\multimap \left\{\begin{array}{l}
         (2x\mod 1,\quad y/2)   \\
                  (2x\mod 1,\; (1+y)/2)
                    \end{array}\right.\, .$$
  It is easy to see that $F$ is expansive with an expansivity constant $\alpha=1/4$.                                                                                                              Indeed, if ($x,y),(x',y')\in T^2$ and $x\neq x'$ then for some positive iterate $n$ we have $|2^n(x-x')|>1/4$ from which the result follows.
   If $x=x'$ but $y\neq y'$ then for some negative $-n$ we will have that $|2^n(y-y')|>1/4$, since
    $F^{-1}$ is given by
   $$(x,y)\multimap \left\{\begin{array}{l}
   (x/2\mod 1,\quad 2y\mod (1))   \\
   ((1+x)/2,\; 2y\mod (1))
         \end{array}\right.$$.

\item To describe the next example,
recall that if $f:M\to M$ is a homeomorphism,  $ x\in M$ and $\epsilon >0$, we  set
 $\Gamma_\epsilon(x)=\{y\in M\,:\, \dist(f^n(x),f^n(y))\leq\epsilon \}$.
An $N$-expansive homeomorphism $f$ is a homeomorphism such that there is $\epsilon>0$ such that for every $x\in M$ it holds that $\sharp \Gamma_\epsilon(x)\leq N$. Here $\sharp A$ denotes the cardinality of the set $A$.
Clearly $1$-expansive homeomorphisms are expansive homeomorphisms.

At \cite[Section 6]{APV} the authors exhibit a diffeomorphism $f: S\to S$, where $S$ is the bi-torus, which is $2-$expansive but it is not expansive \cite[Proposition 6.1]{APV}.
The non-wandering set $\Omega(f)$ consists of an expanding attractor $A$ and a contracting repeller $R$. Let $\alpha>0$ be a constant of $2$-expansiveness of $f$ and let $d=\dist(A,R)$. Note that  $d>0$. Fix $\beta=\min\{\alpha/2,d\}$
and given $x\in S$ define $F:S\multimap S$ by $F(x)=\cup_{ y\in\Gamma_{\beta}(x)} f(y)$. Then $F$ is an expansive multivalued map. Indeed, it is proved in \cite{APV} that for points in $\Omega(f)$ it holds that $\Gamma_\alpha(x)=\{x\}$ (and consequently $\Gamma_\beta(x)=\{x\}$), i.e., $f$ is expansive when restricted to $\Omega(f)$. If now $x\notin \Omega(f)$ then $\Gamma_\epsilon(x)$ may consist of two points, say $\Gamma_\alpha(x)=\{x,y\}$.
In this case $y\notin \Omega(f)$ and hence both $x,y$ are wandering points whose $\omega$- and $\alpha$-limit sets coincide.
If $z\notin\{x,y\}$ and $z\in R$ then by forward iteration by $f$, $\dist_H(F^n(z),F^n(x))$ approaches $d\geq \beta$. Similarly, if $z\in A$ by backward iteration by $f$, $\dist_H(F^{-n}(z),F^{-n}(x))$ approach $d\geq \beta$, $n>0$.
Finally if $z\notin \Omega(f)$ and $z\notin\Gamma_\beta(x)$ then since $f$ is 2-expansive there is $n\in\Z$ such that  the distances $\dist(f^n(x),f^n(z))>\beta$ and $\dist(f^n(y),f^n(z))>\beta$ too. Otherwise $z\in\Gamma_\alpha(x)$. This is clear if $\dist(f^n(x),f^n(z))\leq\beta$ but if $\dist(f^n(y),f^n(z)\leq \beta$ then {\it a fortiori} $z\in\Gamma_\alpha(x)$ and so it should be $x$ or $y$. We 
{have shown} that $\beta$ is a constant of expansiveness for $F:M\multimap M$.
 This example is not RW-expansive. Indeed, as shown in \cite{APV}, there are points $x,y$, $y\in \Gamma_\alpha(x)$  arbitrarily close to each other and so $\forall \epsilon>0$ we can build sequences $\ldots, f^{-1}(x),x,f(x),f^2(x) ,\ldots$
 and $\ldots,f^{-1}(x),x,f(y),f^2(x),\ldots$ which are both in the $F$-orbit of $x$ and that only differs in the first iterate by $f$. Choosing $x,y$ so near that $\dist(f(x),f(y))<\epsilon$ we see that this example is not RW-expansive.
\item
Let $(M,\dist)$ be a compact metric space supporting an expansive homeomorphism $f:M\to M$ with an expansivity constant $\alpha>0$.
For each $g$ in a 
{suitable} $C^0$-neighborhood $\mathcal{N}(f)$ of $f$ we will define an expansive set-valued map $F_g:M\multimap M$.
It is known that the property of being expansive is not an open property in the $C^0$ topology.
Nevertheless, Lemma \ref{Lewowicz1} below states that a ``shadow`` of expansiveness rests.

\begin{Lem} \label{Lewowicz1}
Let $(M,\dist)$ be a compact metric space and $f:M\to M$ an expansive homeomorphism with a constant of expansivity $\alpha>0$.
Then for every $\delta>0$, there is a neighborhood $\mathcal{N}(f)$ of $f$ in the $C^0$-topology such that if $g \in \mathcal{N}(f)$ and for a
pair of points $x,y$ we have that $\forall n\in\Z:\dist(g^n(x),g^n(y))\leq \alpha$, then  for every $n\in\Z$ it holds that $\dist(g^n(x),g^n(y))\leq \delta$.
\end{Lem}

\begin{proof}
See \cite[Lemma 1.1]{Le}.
\end{proof}

That is to say, if $g\in\mathcal{N}(f)$, given two $g$-orbits $\cO(x)$ and $\cO(y)$ either  separate more than $\alpha$ at a certain time $n\in\Z$ or they rest $\delta$-close between them for all $n\in\Z$.

Now choose $\delta<\alpha/4$ and $g\in\mathcal{N}(f)$, where $\mathcal{N}(f)$ is the neighborhood corresponding to $\delta$
given by Lemma \ref{Lewowicz1}. For such a $\delta$ we consider in $M$, as in \cite{Le}, the subsets
   $A(x)=\{y\in M\,:\, \dist(g^n(x),g^n(y))\leq\alpha \,\,\mbox{for all}\,\,\  n\in\Z\}$ which define an equivalence relation that gives a partition of $M$.
   Indeed reflexivity and symmetry are trivial. Transitivity follows from the fact that
   if $y\in A(x)$ and $z\in A(y)$, we have first that  $\dist(g^n(x),g^n(y))\leq\alpha \,\,\mbox{for all}\,\,\  n\in\Z\}$ and
   $\dist(g^n(z),g^n(y))\leq\alpha \,\,\mbox{for all}\,\,\  n\in\Z\}$ and
   by Lemma \ref{Lewowicz1} we get  $\dist(g^n(x),g^n(y))\leq\alpha/4 \,\,\mbox{for all}\,\,\  n\in\Z\}$ and
   $\dist(g^n(y),g^n(z))\leq\alpha/4 \,\,\mbox{for all}\,\,\  n\in\Z\}$.
   Thus, $ \dist(g^n(x), g^n(z)) \leq\alpha/2<\alpha$ for all $n\in\Z$ asserting that $z\in A(x)$ and that $A(x)=A(y)=A(z)$.

 Now  define $F_g:M\multimap M$ by $F_g(x)=\cup_{y\in A(x)} g(y)$. Then $F_g$ is an expansive set-valued map with $F_g^{-1}(x)=\cup_{y\in A(x)} g^{-1}(y)$.    Indeed given two points $x$, $z$ if $z\in A(x)$ then $F_g(x)=F_g(z)$. If $z\notin A(x)$ then there is $n\in\Z$ such that $\dist(g^n(x),g^n(z))>\alpha$. Since points in $A(z)$ rest at a distance less than $\alpha/4$ {between them} by the action of $g$ 
 (and similarly for points in $A(x)$) we conclude that $\dist_H(F^n_g(x),F^n_g(z))>\alpha/2$ and $F_g$ is a set-valued expansive map.
   $\square$

\end{itemize}

\section{Expansiveness and positive topological entropy.} \label{s7}

In this section, we prove that RW-expansive as well as positive expansive $F:M\multimap M$ set-valued maps, see Definition \ref{expan}, defined on non trivial Peano spaces have topological entropy strictly positive.

Let $F:M\multimap M$ be an expansive set-valued map.
Define $\mathcal{O}(F)$ as the set of all the suborbits of $F$ (see Definition \ref{suborbit}) and $\mathcal{O}(F)(x)$ as the set of all the suborbits $\{x_n\}_{n\in\Z}$ of $F$ such that $x_0=x$.
Similarly, given $A\subset M$ we define $\mathcal{O}(F)(A)=\cup_{x\in A}\mathcal{O}(F)(x)$.
 Given a suborbit $(x_k)_{k\in\Z}$ we denote by $\mathcal{O}_n(x)=(x_k)_{0}^{n}$ the segment of suborbits such that $x_0=x$, $x_j\in F(x_{j-1})$ for every $j=1,2,\ldots ,n$. Let us also define $\mathcal{O}_n(F)(A)=\cup_{x\in A}\mathcal{O}_n(x)$.

Let $F:M\multimap M$ be
 a set-valued map, $ n\in\N$ and $\epsilon > 0$. Recall that given a finite $A \subset M$,  $\sharp(A)$ denotes  the cardinality of $A$.

\begin{Df}
A subset $A \subset \mathcal{O}_n(F)(M)$ is $(n, \epsilon)$-spanning if, for any $(x_1, \ldots, x_n)\subset \mathcal{O}_n(F)$,
there is a $(y_1, \ldots, y_n)\subset A$ such that $\dist(x_i, y_i) < \epsilon$ for all $i=1,\ldots , n$.
Let $r_n(\epsilon)$ denote the minimum cardinality of an $(n, \epsilon)$-spanning set.
\end{Df}

\begin{Df}
 A subset $A \subset \mathcal{O}_n(F)(M)$ is $(n, \epsilon)$-separated if for any
$(x_1, \ldots , x_n)$, $(y_1, \ldots , y_n)$  in $A$,
$\dist(x_i, y_i)>\epsilon$ for at least one $i \in \{1, \ldots, n\}$. Let $s_n(\epsilon)$ denote the greatest cardinality
of an $(n, \epsilon)$-separated set.
\end{Df}

\begin{Df} \label{topentropy}
We define topological entropy for a set-valued function using $s_n(\epsilon)$ by
$$h_{top}(F) = \lim_{\epsilon\to 0}\limsup_{n\to\infty}\frac{1}{n}\log(s_n(\epsilon)).$$
\end{Df}
\begin{Obs}
\begin{enumerate}
\item
It is possible to define topological entropy using spanning sets as
$$h_{top}(F) = \lim_{\epsilon\to 0}\limsup_{n\to\infty}\frac{1}{n}\log(r_n(\epsilon)).$$
Indeed, at \cite{RT} it is proved that $r_n(\epsilon) \leq s_n(\epsilon) \leq r_n(\epsilon/2)$,  and this implies
 that for u.s.c. set-valued maps both definitions are equivalent.
\item
At \cite{CMM} the authors introduce definitions of topological entropy for set-valued maps that differ slightly from those given in \cite{RT}. In this article, we adopt the definition given in \cite{RT}.
\end{enumerate}
\end{Obs}

\begin{Teo} \label{conexos}
Let $F:M\multimap M$ be a continuous set-valued map such that $F(x)$ is compact and connected for every $x\in M$. If $\gamma$ is a continuum contained in $M$ then $F(\gamma)$ is compact and connected.
\end{Teo}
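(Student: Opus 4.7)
The plan is to handle compactness and connectedness separately. Compactness of $F(\gamma)$ is essentially already established in the preamble: since $F$ is Hausdorff-continuous it is, in particular, upper semi-continuous (equivalently, a closed relation), and the paper has noted that closedness of $F$ implies $F(A)$ is closed for every closed $A\subset M$. Applying this with $A=\gamma$, I get that $F(\gamma)$ is a closed (hence compact) subset of $M$.

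For connectedness I will argue by contradiction. Suppose $F(\gamma)=A\sqcup B$ with $A$, $B$ non-empty, disjoint, and closed in $F(\gamma)$, hence closed in $M$. The crucial first observation is that for each $x\in\gamma$, the continuum $F(x)$ is contained in $A\cup B$ and is connected, so it lies entirely in $A$ or entirely in $B$. This yields a partition $\gamma=\gamma_A\sqcup\gamma_B$, where $\gamma_A=\{x\in\gamma:F(x)\subset A\}$ and $\gamma_B=\{x\in\gamma:F(x)\subset B\}$, which are disjoint because $F(x)\neq\emptyset$. Both pieces are non-empty: any $y\in A$ satisfies $y\in F(x)$ for some $x\in\gamma$, and connectedness of $F(x)$ forces $F(x)\subset A$, so $x\in\gamma_A$, and symmetrically for $B$.

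The heart of the argument is to show that $\gamma_A$ (and by symmetry $\gamma_B$) is closed in $\gamma$. This is where Hausdorff continuity enters. Let $x_n\in\gamma_A$ with $x_n\to x\in\gamma$; then $d_H(F(x_n),F(x))\to 0$. For any $y\in F(x)$, Hausdorff convergence provides a sequence $y_n\in F(x_n)$ with $y_n\to y$. Since $F(x_n)\subset A$ and $A$ is closed, $y\in A$, whence $F(x)\subset A$ and $x\in\gamma_A$. Thus $\gamma=\gamma_A\sqcup\gamma_B$ is a decomposition of the connected set $\gamma$ into two non-empty disjoint closed subsets, contradicting connectedness of $\gamma$. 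Hence no such separation of $F(\gamma)$ exists, and $F(\gamma)$ is a continuum.

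I do not foresee a substantive obstacle here; the only point requiring care is keeping $A,B$ treated as subsets of $M$ so that the Hausdorff-convergence step delivers a genuine closed-set argument rather than a set-in-$C(M)$ argument. The proof does use both hypotheses essentially: the connectedness of each fiber $F(x)$ is what collapses $\gamma$ onto the discrete partition $\{A,B\}$, and Hausdorff continuity is what propagates the closedness of $A,B$ back to $\gamma_A,\gamma_B$.
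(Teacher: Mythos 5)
Your proof is correct and follows essentially the same route as the paper: compactness comes from the already-noted fact that an usc (closed) relation maps compact sets to compact sets, and connectedness is proved by contradiction by partitioning $\gamma$ according to whether $F(x)$ lies in one or the other piece of a supposed separation of $F(\gamma)$, with Hausdorff continuity turning that partition into a disconnection of $\gamma$. The only cosmetic difference is that you show the two pieces of $\gamma$ are relatively \emph{closed} via sequential Hausdorff convergence, whereas the paper shows they are relatively \emph{open} via an $\epsilon$--$\delta$ argument with disjoint neighborhoods; since the pieces partition $\gamma$, the two formulations are equivalent.
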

\begin{proof}
The compactness of  $F(\gamma)$ follows from Proposition \ref{compactos}.
  Let us prove that $F(\gamma)$ is connected.
 The proof goes by contradiction. Assume that there are non-empty disjoint closed subsets $K_1$, $K_2$, such that $F(\gamma)\subset K_1\cup K_2$.
  By compactness there are open sets $U_1\supset K_1$ and $U_2\supset K_2$ such that $\overline{U_1}\cap\overline{U_2}=\emptyset$.
Let $x\in\gamma$, since $F(x)=F(\{x\})$ is connected it must be contained either in $K_1$ or in $K_2$. Moreover, since $F$ is continuous there is $\delta>0$ such that if $\dist(x,x')<\delta$ then $F(x')\subset U_i$ if $F(x)\subset K_i$, $i=1,2$.
Therefore, again by the connectedness of $F(x')$, we get that $F(x')\subset K_i$.
Since we are assuming that $F(\gamma)$ is not connected, the subsets $A_i$ of $\gamma$ such that $x\in A_i$ if $F(x)\subset K_i$ are non void  and the previous argument implies that $A_i, 1 \leq i \leq 2,$ are open relative to $\gamma$.
As $F(x)$ is connected to all $x\in M$,  the union $A_1 \cup A_2$ is equal to $\gamma$,
 contradicting the connectedness of $\gamma$.
\end{proof}

\noindent {\bf{Proof of Theorem \ref{t-expansive-entropy-positive}.}}\/
Let $K\subset M$ be a non trivial continuum of diameter $\beta>0$ less than $\alpha$. Then there are points $x,y\in K$ such that $\dist(x,y)=\beta$.
Since $F$ is expansive there is $n\in\Z$ such that $\dist_H(F^n(x),F^n(y))>\alpha$ and hence $F^n(K)$ has a diameter greater than $\alpha$. Applying Theorem \ref{conexos}, we obtain that  $F$ is continuum wise expansive as a set-valued map.
Thus, as in \cite{CP}, we conclude that $h_{top}(F)>0$.
$\square$

\medbreak

{\em Question}: If $F(x)$ is not connected but $F:M\multimap M$ is expansive and  $\mbox{dim}(M)>0$, is it true that
the topological entropy of $F$ is positive?
Note that the subset $F(x)$ can be  finite  or countable, a Cantor set, etc. So we cannot expect to have that if $\gamma$ is connected then $F(\gamma)$ is connected without imposing any other condition.
For instance, if $f:S^1\to S^1$ is a Denjoy  homeomorphism defined on the unit circle  $S^1$, the restriction of $f$ to $\Omega(f)$
is a Cantor set, that $f_{\Omega(f)}:\Omega(f) \to \Omega(f)$ is expansive and
$h_{top}(f_{\Omega(f)})=0$ (every homeomorphism defined on $S^1$ has zero topological entropy).
\medbreak

Below we give proof that if $M$ is a compact manifold and $F:M\multimap M$ is continuous and satisfies some additional hypothesis then $h_{top}(F)>0$. To do so, we first prove some auxiliary lemmas.

\begin{Lem} \label{4.4}
Let $M$ be a compact manifold and $F:M\multimap M$ be positive expansive and continuous with an expansivity constant $\alpha>0$.
Let $\delta>0$ and $x,y\in M$ such that $\dist(x,y)\geq \delta$. Then there is $n_0>0$ such that $\dist_H(F^j(x),F^j(y))\geq \alpha$ for some $j\in [0,n_0]$.
\end{Lem}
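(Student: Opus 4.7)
The statement is a uniformity enhancement of positive expansiveness: from the hypothesis we only get an $n$ depending on the pair $(x,y)$, but we need a bound $n_0$ that works uniformly for every pair at distance at least $\delta$. The natural route is a compactness argument on the closed set
\[
K_\delta=\{(x,y)\in M\times M:\dist(x,y)\geq\delta\}\subset M\times M,
\]
which is compact since $M$ is compact.

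First I would verify that for each fixed $n\geq 0$, the map
\[
\Phi_n:M\times M\to\R_{\geq 0},\qquad \Phi_n(x,y)=d_H(F^n(x),F^n(y)),
\]
is continuous. Since $F$ is Hausdorff-continuous, Proposition~\ref{Prop-lowerandup} gives that $F$ is simultaneously usc and lsc; a routine check shows that composition preserves both properties, so the iterate $F^n$ (viewed as a map $M\to C(M)$) is again Hausdorff-continuous, and hence $\Phi_n$ is continuous. Then for each $(x,y)\in K_\delta$, positive expansiveness (with constant $\alpha$) yields some $n=n(x,y)\in\N$ with $\Phi_n(x,y)>\alpha$. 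By continuity of $\Phi_n$, there is an open neighborhood $U_{(x,y)}\subset M\times M$ of $(x,y)$ on which $\Phi_n>\alpha$ still holds.

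The family $\{U_{(x,y)}\}_{(x,y)\in K_\delta}$ is an open cover of the compact set $K_\delta$, so I would extract a finite subcover $U_{(x_1,y_1)},\dots,U_{(x_k,y_k)}$ with corresponding integers $n_1,\dots,n_k$, and set
\[
n_0=\max_{1\leq i\leq k} n_i.
\]
Given any $(x,y)\in K_\delta$, there is some $i$ with $(x,y)\in U_{(x_i,y_i)}$, and then $d_H(F^{n_i}(x),F^{n_i}(y))>\alpha\geq\alpha$ with $n_i\in[0,n_0]$, which is exactly the conclusion.

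The only genuine checkpoint is the continuity of $\Phi_n$, which in turn reduces to the stability of Hausdorff-continuity under the composition $F^n=F\circ\cdots\circ F$ of set-valued maps; this is a well-known fact and is also immediate from the characterization in Proposition~\ref{Prop-lowerandup}. Everything else is a direct finite-subcover argument, so no further subtlety is expected.
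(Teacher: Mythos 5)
Your proof is correct and follows essentially the same route as the paper's: both are compactness arguments on the set of pairs $(x,y)$ with $\dist(x,y)\geq\delta$, resting on the continuity of $(x,y)\mapsto d_H(F^n(x),F^n(y))$ for each fixed $n$. The paper argues by contradiction via sequential compactness (extracting convergent subsequences from a sequence of failing pairs), whereas you use a finite subcover directly; your explicit check that Hausdorff continuity passes to the iterates $F^n$ is a detail the paper's proof uses implicitly.
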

\begin{proof}
The proof goes by contradiction.
Assume there is a pair of sequences $\{x_n\}_{n\in\N}$ and $\{y_n\}_{n\in \N}$ in $M$
such that
$\dist(x_n,y_n)\geq \delta$ and
$\dist_H(F^j(x_n),F^j(y_n))< \alpha$  for every $j\in [0,n]$. By compactness of $M$ there are subsequences $\{x_{n_k}\}$ and $\{y_{n_k}\}$ from $\{x_n\}$ and $\{y_n\}$  converging to $x$ and $y$ respectively such that $\dist_H(F^j(x),F^j(y))\leq \alpha$ for every $j\in\N$ and $\dist(x,y)\geq\delta$.
  Thus contradicting expansiveness.
\end{proof}

\begin{Lem} \label{4.5}
Let $M$ be a compact manifold and $F:M\multimap M$ be positive expansive and continuous with $\alpha>0$ an expansivity constant. Let $x,y\in M$ and $n_1\in\N$ be such that $\dist_H(F^j(x),F^j(y))=\alpha/4$ for some $j\in\N$ such that $j\leq n_1$. Then there is $n_2\in\N$ such that $\dist_H(F^{j+k}(x), F^{j+k}(y))\geq \alpha$ for some $k\in\N$ such that $k\leq n_2$.
\end{Lem}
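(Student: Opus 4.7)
The plan is to argue by contradiction, following the compactness template of Lemma~\ref{4.4} but with an additional pigeon-hole step to absorb the free parameter $j\le N_1$. Suppose that no uniform $N_2$ works; then for every $N\in\N$ we can produce a triple $(x_N,y_N,j_N)$ with $j_N\le N_1$, $d_H(F^{j_N}(x_N),F^{j_N}(y_N))=\alpha/4$, yet $d_H(F^{j_N+k}(x_N),F^{j_N+k}(y_N))<\alpha$ for every $1\le k\le N$. Because $j_N$ ranges over the finite set $\{0,1,\ldots,N_1\}$, pigeon-hole delivers a subsequence along which $j_N\equiv j^{\star}$ is constant, and then compactness of $M$ yields a further convergent subsequence $x_N\to x^{\star}$, $y_N\to y^{\star}$.

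The crucial continuity input is that each iterate $F^m\colon M\to C(M)$ inherits Hausdorff continuity for every fixed $m\ge 0$. This is a routine induction: the base case is the hypothesis on $F$, and the inductive step uses Proposition~\ref{Prop-lowerandup} together with the standard fact that the hyperspace extension $K\mapsto F(K)$ on $C(M)$ inherits Hausdorff continuity from $F$. Consequently $F^m(x_N)\to F^m(x^{\star})$ and $F^m(y_N)\to F^m(y^{\star})$ in $d_H$ for each fixed $m\ge 0$. Passing to the limit in the hypothesis and in the (assumed) failure of the conclusion yields $d_H(F^{j^{\star}}(x^{\star}),F^{j^{\star}}(y^{\star}))=\alpha/4$, which in particular forces $x^{\star}\neq y^{\star}$, together with the bound $d_H(F^{j^{\star}+k}(x^{\star}),F^{j^{\star}+k}(y^{\star}))\le\alpha$ for every $k\ge 0$.

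Positive expansiveness of $F$ applied to the distinct pair $(x^{\star},y^{\star})$ now supplies an integer $n\in\N$ with $d_H(F^n(x^{\star}),F^n(y^{\star}))>\alpha$. When $n\ge j^{\star}$ the choice $k=n-j^{\star}$ directly contradicts the bound of the previous paragraph, closing the argument.

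The main obstacle I expect is the case $n<j^{\star}$, in which the separating iterate produced by expansiveness precedes $j^{\star}$ and the limit gives no automatic contradiction, since we only controlled the forward-in-time iterates of the bad sequence. My proposed remedy is to refine the counterexample sequence by insisting that $j_N$ be the \emph{first} index in $\{0,\ldots,N_1\}$ at which $d_H(F^{j_N}(x_N),F^{j_N}(y_N))\ge\alpha/4$; one can then normalize the equality to $d_H=\alpha/4$ by moving $(x_N,y_N)$ along the arc joining them (such an arc exists by Claim~\ref{Convexo} using the convex metric supplied by Theorem~\ref{Mo Bi}) and exploiting Hausdorff continuity of $F^{j_N}$ to interpolate to the exact threshold. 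With this strengthening in force, we additionally have $d_H(F^m(x_N),F^m(y_N))<\alpha/4<\alpha$ for every $0\le m<j_N$, so the limit bound upgrades to $d_H(F^m(x^{\star}),F^m(y^{\star}))\le\alpha$ for every $m\ge 0$, which contradicts positive expansiveness of the distinct pair $(x^{\star},y^{\star})$ outright and completes the proof.
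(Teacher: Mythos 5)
Your first three paragraphs reproduce the paper's own argument for Lemma \ref{4.5} essentially verbatim: negate the statement, pigeon-hole the finitely many values of $j_N\le N_1$ to fix $j^\star$, pass to a convergent subsequence, use Hausdorff continuity of each fixed iterate $F^m$ to carry the equality $d_H(F^{j^\star}(x^\star),F^{j^\star}(y^\star))=\alpha/4$ (hence $x^\star\neq y^\star$) and the bounds $d_H(F^{j^\star+k}(x^\star),F^{j^\star+k}(y^\star))\le\alpha$ to the limit, and then invoke positive expansiveness. You are also right that this argument, as it stands, only produces a contradiction when the separating time $n$ supplied by expansiveness satisfies $n\ge j^\star$; the paper's proof passes over the case $n<j^\star$ in silence, so identifying it is to your credit.

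Your proposed repair of that case, however, does not work. The triples $(x_N,y_N,j_N)$ are handed to you by the negation of the lemma; you are not free to ``refine'' them. If you replace $j_N$ by the first index $j'_N$ at which $d_H(F^{j'_N}(x_N),F^{j'_N}(y_N))\ge\alpha/4$, the window of times in $[j'_N,j_N)$ --- of length up to $N_1$ --- remains completely uncontrolled (the distance there may well exceed $\alpha$, which is precisely the troublesome scenario), while the bound $d_H(F^{j_N+k}(x_N),F^{j_N+k}(y_N))<\alpha$ is only known on the shifted window $k\le N$ measured from $j_N$, not from $j'_N$. And if you additionally move $y_N$ along the arc of Claim \ref{Convexo} so as to renormalize the distance at time $j'_N$ to exactly $\alpha/4$, the new pair has no reason whatsoever to satisfy $d_H(F^{j_N+k}(\cdot),F^{j_N+k}(\cdot))<\alpha$ for $k\le N$, so it is no longer a counterexample and the whole contradiction scheme collapses. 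The honest fix is to strengthen the hypothesis of the lemma --- for instance, to require in addition that $d_H(F^m(x),F^m(y))\le\alpha$ for all $0\le m\le j$ --- which is exactly what holds for the pairs to which the lemma is applied in the proof of Theorem \ref{htop>0}; with that hypothesis the limit pair satisfies $d_H(F^m(x^\star),F^m(y^\star))\le\alpha$ for every $m\ge 0$, and positive expansiveness yields the contradiction outright, as in the final step you describe.
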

\begin{proof}
For if it were not true there would exist $x_n,y_n$ such that  $\dist_H(F^{j_n}(x_n),F^{j_n}(y_n))=\alpha/4$  for some $j_n\in\N$ such that $j_n\leq n_1$ and $\dist_H(F^{j_n+k}(x_n), F^{j_n+k}(y_n))< \alpha$ for every $k\leq n$. Since the set $\{0,1,\ldots,n_1\}$ is finite and
$j_n\in \{0,1,\ldots,n_1\}$ there is some value $j_n$ that repeats infinitely many times as $n\to\infty$. Let us denote this  $j_n$ by $j_0$.
Without loss of generality, we may assume that $j_0$ is the same for all the sequences $\{(x_n,y_n)\}\in M\times M$.
By compactness of $M$, (and consequently of $M\times M$) there is an accumulation point $(x,y)$ of $(x_n,y_n)$.
Again without loss of generality, we can assume that $(x_n,y_n)\to (x,y)$. Since $F$ is Hausdorff continuous and $\dist_H(F^{j_0}(x),F^{j_0}(y))=\alpha/4$ we have $x\neq y$.
We also have that $\dist_H(F^{j_0+k}(x_n), F^{j_0+k}(y_n))< \alpha$  for every $k\in [0,n]$ and since $F$ is Hausdorff continuous, we get
that $\dist_H(F^{j_0+k}(x), F^{j_0+k}(y))\leq \alpha$ for all $k\in\N$ which contradicts that $F$ is positively expansive.
Therefore there exists $n_2>0$ verifying the thesis.
\end{proof}

Observe that $n_2$ given by Lemma \ref{4.5} depends not only on $\alpha/4$ but also on the number $j$ of iterates by $F$ needed to have that $\dist_H(F^j(x),F^j(y))=\alpha/4$.
To have a uniform bound not depending on $j$ we need to add an extra hypothesis on $F$.
To do so, we start defining the set $\mathcal{D}(M)$ as
$$\mathcal{D}(M)=\{K\in C(M): \exists \,\, x\in M \mbox{and} \,\, \exists \,\, n\in\Z\,\,\, \mbox{such that}\,\,  F^n(x)=K\}.$$

\begin{Lem} \label{4.6}
Let $\mathcal{D}(M)$ be as above and assume it is a closed subset of $C(M)$. Then the bound $n_2$ of Lemma \ref{4.5} depends only on the quantity $\alpha/4$.
\end{Lem}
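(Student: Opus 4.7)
I argue by contradiction, reusing the scheme of Lemma \ref{4.5} but replacing the finite index set $\{0,1,\dots,N_1\}$ by the compact space $\mathcal{D}(M)$.

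\textbf{Step 1 (extracting an offending sequence).} Suppose no $N_2=N_2(\alpha/4)$ works. For each $n\in\N$ pick $x_n,y_n\in M$ and $j_n\in\N$ with
$$d_H(F^{j_n}(x_n),F^{j_n}(y_n))=\alpha/4 \quad\text{and}\quad d_H(F^{j_n+k}(x_n),F^{j_n+k}(y_n))<\alpha,\ k=0,\dots,n.$$
Since Lemma \ref{4.5} already supplies a bound whenever $j_n$ stays in a fixed finite range, we may assume $j_n\to\infty$. Set $K_n=F^{j_n}(x_n)$ and $L_n=F^{j_n}(y_n)$; both lie in $\mathcal{D}(M)$.

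\textbf{Step 2 (using closedness of $\mathcal{D}(M)$).} Here the new hypothesis enters: $\mathcal{D}(M)$ is closed in the compact metric space $(C(M),d_H)$, hence compact. Passing to a subsequence, $K_n\to K$ and $L_n\to L$ with $K,L\in \mathcal{D}(M)$. The associated Hausdorff extension $\hat F\colon C(M)\to C(M)$, $\hat F(A)=F(A)$, is Hausdorff-continuous (a direct consequence of the Hausdorff continuity of $F$), satisfies $\hat F^k(F^j(x))=F^{j+k}(x)$, and therefore leaves $\mathcal{D}(M)$ invariant. Continuity of $d_H$ and of every $\hat F^k$ yield
$$d_H(K,L)=\alpha/4>0 \quad\text{and}\quad d_H(\hat F^k(K),\hat F^k(L))\le \alpha\ \text{for all } k\in\N,$$
so $K\ne L$ while their forward $\hat F$-orbits remain $\alpha$-close in Hausdorff distance.

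\textbf{Step 3 (contradiction with positive expansiveness).} Since $d_H(K,L)=\alpha/4$ is attained, pick, say, $p\in K$ with $d(p,L)=\alpha/4$ and the nearest point $q\in L$, so that $p\ne q$ and $d(p,q)=\alpha/4$. Applying positive expansiveness of $F$ to $(p,q)$—exactly as in the closing lines of Lemma \ref{4.5}—produces some $m\in\N$ with $d_H(F^m(p),F^m(q))>\alpha$. Because $F^m(p)\subset \hat F^m(K)$ and $F^m(q)\subset \hat F^m(L)$, this separation transfers to $d_H(\hat F^m(K),\hat F^m(L))>\alpha$, contradicting the displayed bound of Step 2.

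\textbf{Main obstacle.} The delicate point is Step 3: promoting a separation of the orbit sets of the two \emph{points} $p,q$ to a Hausdorff separation of the larger sets $\hat F^m(K),\hat F^m(L)$ requires the same bookkeeping already present, at least implicitly, at the end of the proof of Lemma \ref{4.5}. The genuinely new input of the present lemma is concentrated in Step 2: closedness of $\mathcal{D}(M)$ is exactly what lets us extract the limit pair $(K,L)$ from sequences $(K_n,L_n)=(F^{j_n}(x_n),F^{j_n}(y_n))$ whose iteration times $j_n$ are no longer confined to any a~priori finite range $\{0,\dots,N_1\}$, thereby removing the dependence of $N_2$ on $N_1$.
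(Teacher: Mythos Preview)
Your Step~3 contains a genuine gap. From $F^m(p)\subset \hat F^m(K)$ and $F^m(q)\subset \hat F^m(L)$ together with $d_H(F^m(p),F^m(q))>\alpha$ one \emph{cannot} conclude $d_H(\hat F^m(K),\hat F^m(L))>\alpha$: Hausdorff distance does not increase under passage to supersets. Concretely, $d_H(F^m(p),F^m(q))>\alpha$ only gives a point, say $z\in F^m(p)$, with $\dist(z,F^m(q))>\alpha$; but $z$ may sit within $\alpha$ of some \emph{other} point of $\hat F^m(L)\supset F^m(q)$. Your appeal to ``the same bookkeeping'' in Lemma~\ref{4.5} does not help: there the limits are points $x,y\in M$, and the objects $F^{j_0+k}(x),F^{j_0+k}(y)$ are already the sets whose Hausdorff distance is controlled---no promotion from subsets to supersets is ever needed.

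The paper sidesteps this issue entirely. Instead of extracting limits in $\mathcal{D}(M)$ separately, it works with the diagonal set $\Delta_{\mathcal{D}}=\{(F^n(x),F^n(y)):x,y\in M,\ n\in\N\}\subset \mathcal{D}(M)\times\mathcal{D}(M)$ and argues it is closed. The limit of $(F^{k_n}(x_n),F^{k_n}(y_n))$ then lies in $\Delta_{\mathcal{D}}$, i.e.\ equals $(F^{j}(x),F^{j}(y))$ for a \emph{common} exponent $j$ and points $x,y\in M$. Now $\hat F^k(K)=F^{j+k}(x)$ and $\hat F^k(L)=F^{j+k}(y)$ on the nose, so positive expansiveness of $F$ applied to the pair $(x,y)$ contradicts $d_H(F^{j+k}(x),F^{j+k}(y))\le\alpha$ for all $k$ directly---there is nothing to promote. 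Your use of only $K,L\in\mathcal{D}(M)$ gives $K=F^{a}(u)$, $L=F^{b}(v)$ with possibly $a\ne b$, which is not enough to run the expansiveness argument.
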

\begin{proof}
By hypothesis $\mathcal{D}(M)$ is closed and hence $\mathcal{D}(M)\times \mathcal{D}(M)$ is closed on the compact metric space $C(M)\times C(M)$.
Therefore $\Delta_\mathcal{D}$ the subset of $ \mathcal{D}(M)\times\mathcal{D}(M)$  of closed subsets of $M\times M$ of the form $(F^n(x),F^n(y))$ for some $x,y\in M$ and $n\in \N$ is also closed on $C(M)\times C(M)$.
Assuming that the bound $n_2$ is not uniform we may find a sequence of pairs $(x_n,y_n)\in M\times M$ such that $\dist_H(F^{k_n}(x_n),F^{k_n}(y_n))=\alpha/4$ and
$\dist_H(F^{k_n+j}(x_n),F^{k_n+j}(y_n))<\alpha$ for every $j\in[0,n]$.
Since $\Delta_\mathcal{D}$ is closed the Hausdorff limit of $(F^{k_n}(x_n),F^{k_n}(y_n))$
is a point of $\Delta_\mathcal{D}$ and so there are $j\in \N$ and points $x,y\in M$ in the limit set of $(x_n,y_n)$ such that $\dist_H(F^j(x),F^j(y))=\alpha/4$ but $\dist_H(F^n(x),F^n(y))\leq \alpha$ for every $n\in\N$. Thus contradicting that $F$ is positively expansive.

\end{proof}

\noindent {\bf{ Proof of Theorem \ref{htop>0}.}}\/
Consider a geodesic arc $\gamma\subset M$ of length $\ell(\gamma)=\frac{\alpha}{4}$ image of the interval $[0,1]\subset \R$ by a function $\gamma$ (the use of $\gamma$ for the arc and the function simultaneously should not cause problems). Let $x_0=\gamma(0)$ and $x_1=\gamma(1)$
Since $F$ is positively expansive, by Lemma \ref{4.4}, there is $n_0\in\Z^+$, depending on $\alpha/4$, such that $\dist_H(F^{n_1}(x_0),F^{n_1}(x_1))\geq \alpha$ for some $n_1\in [0,n_0]$, $n_1>0$ such that $\dist_H(F^j(x_0),F^j(x_1)) < \alpha$ for every $j\in [0,n_1-1]$.
Next, we rename the points $x_0$ and $x_1$ calling them respectively by $x_{00}$ and $x_{11}$.
Since $F$ is continuous there are $x_{01},x_{10}\in \gamma$ such that $x_{00}<x_{01}<x_{10}<x_{11}$ (using implicitly the order of $[0,1]$) and  $\dist_H(F^{n_1}(x_{00}),F^{n_1}(x_{01})=\alpha/4$ and $\dist_H(F^{n_1}(x_{10}),F^{n_1}(x_{11})=\alpha/4$ and consequently $\dist_H(F^{n_1}(x_{01}),F^{n_1}(x_{10})\geq \alpha/2$.
By Lemmas \ref{4.5} and \ref{4.6} there exists $N\in\N$ and there are $n_2,n'_2\in [0,N]$ such that
$$\dist_H(F^{n_2}(F^{n_1}(x_{00})),F^{n_2}(F^{n_1}(x_{01})))\geq \alpha\quad  and$$
 $$\dist_H(F^{n'_2}(F^{n_1}(x_{10})),F^{n'_2}(F^{n_1}(x_{11})))\geq \alpha\, .$$
Again we rename the points $x_{00}$, $x_{01}$ $x_{10}$ and $x_{11}$ calling them respectively by $x_{000}$, $x_{011}$, $x_{100}$ and $x_{111}$.
We may repeat the argument and find points $x_{001}$, $x_{010}$, $x_{101}$ and $x_{110}$ on $\gamma$ such that
$x_{000}<x_{001}<x_{010}<x_{011}<x_{100}<x_{101}<x_{110}<x_{111}$ and
$$\dist_H(F^{n_2}(F^{n_1}(x_{000})),F^{n_2}(F^{n_1}(x_{001})))= \alpha/4\, ,$$ $$\dist_H(F^{n_2}(F^{n_1}(x_{010})),F^{n_2}(F^{n_1}(x_{011})))= \alpha/4\, ,$$
$$\dist_H(F^{n'_2}(F^{n_1}(x_{100})),F^{n'_2}(F^{n_1}(x_{101})))= \alpha/4\quad and$$
$$\dist_H(F^{n'_2}(F^{n_1}(x_{110})),F^{n'_2}(F^{n_1}(x_{111})))= \alpha/4\, .$$

Again there exist positive integers $n_3,n'_3,n''_3,n'''_3$ bounded by $N$ such that
   $$\dist_H(F^{n_3}(F^{n_2}(F^{n_1}(x_{000}))),F^{n_3}(F^{n_2}(F^{n_1}(x_{001}))))\geq \alpha,\; \ldots \ldots$$
    $$\ldots\; ,\dist_H(F^{n'''_3}(F^{n'_2}(F^{n_1}(x_{110}))),F^{n'''_3}(F^{n'_2}(F^{n_1}(x_{111}))))\geq \alpha\, .$$

Arguing recursively, at the $k^{th}$ step we find a sequence of $2^k$ points $x_{i_1,i_2,\ldots,i_k}$, where $i_j\in\{0,1\}$  for $j=1,2,\ldots,k$ which are $\alpha/4$ separated in the distance $\dist_{H,k}(x,y)=\max_{j\in[0,k]}\{\dist_H(F^j(x),F^j(y))\}$.

It is well known that this implies that $h_{top}(F)$ is positive.
$\square$

\medbreak

Let us now show that the topological entropy of RW-expansive set-valued maps defined on a compact manifold $M$ is positive.
To this end, we first prove that RW-expansiveness defined on compact metric spaces is uniform.
Recall that a space is called sequentially compact if every sequence of the space admits a convergent subsequence.

\begin{Lem} \label{seq compact}
Let $\{M_j\}_{j\in\N}$ be a family  of sequentially compact metric space. Then the countable product $\prod_{j\in\N}M_j$ is sequentially compact.
\end{Lem}
\begin{proof}
See \cite[Chapter 11, Theorem 1.12]{Jo}.
\end{proof}

\begin{Lem} \label{uniforme}
If $F$ is an RW-expansive set-valued map with a constant of expansiveness $\alpha>0$ then
for every $\delta>0$ there is $N=N(\delta)\in\N$ such that if $\dist(x,y)\geq \delta$ then for every pair of $F$-sequences $\{x_n\}$, $\{y_n\}$, $x_0=x$, $y_0=y$, $\dist(x_n,y_n)>\alpha$ for some $n\in [-N,N]$.
\end{Lem}
\begin{proof}
Arguing by contradiction assumes that the contrary holds and that there is a pair of sequences in $M$ $x^k, k\in \N$, and $y^k, k\in\N$, such that
$\dist(x^k,y^k)\geq \delta$ and $F$-sequences $\{x(k)_n\}_{n\in\Z}$ and $\{y(k)_n\}_{n\in\Z}$ with $x^k=x(k)_0$ and $y^k=y(k)_0$, such that  for every $n\in [-k,k]$ it holds $\dist(x(k)_n),y(k)_n))\leq \alpha$.
In particular, for $n=0$ we have $\dist(x(k)_0,y(k)_0)\leq \alpha$. On account of Lemma \ref{seq compact} there is a convergent subsequence of $x(k)_n$ and $y(k)_n$ for every $n\in\Z$, say $\{x({k_j})_n\}_{n\in \Z}$ and $\{y({k_j}_n\}_{n\in\Z}$.
Let $x(k_j)_n\to x_n$ and $y(k_j)_n\to y_n$ when $j\to +\infty$. Hence we have $\delta\leq \dist(x_0,y_0)\leq\alpha$ and for every $n\in\Z$, $\dist(x_n,y_n))\leq \alpha$ contradicting that $F$ is RW-expansive.
\end{proof}

\noindent {\bf{Proof of Theorem \ref{htopRW}.}}\/
Let $\alpha>0$ be a constant of RW-expansiveness for $F$.
Since a Peano space $M$ is connected and locally connected, as in Theorem \ref{htop>0}, we may find a geodesic arc $\gamma:[0,1]\to M$ joining two different points $x=\gamma(0)$ and $y=\gamma(1)$ with $\alpha/4=\dist(x,y)$. By Lemma \ref{uniforme} there is $N>0$ such that for every pair of $F$-sequences $\{x_n\}_{n\in \Z}$ and $\{y_n\}_{n\in\Z}$ with $x_0=x$ and $y_0=y$  there is  $n_1\in\Z$ with $|n_1|\leq N$ and  it holds that $\dist(x_{n_1},y_{n_1})>\alpha/2$. We may assume that $n_1>0$ and that for $0\leq j< n_1$ we have that $\dist(x_j,y_j)<\alpha/2$. Let $z=\gamma(1/2)$, and an $F$ sequence $\{z_n\}_{n\in\Z}$ with $z_0=z$ such that either $\dist(x_{n_1},z_{n_1})\geq\alpha/4$ or $\dist(y_{n_1},z_{n_1})\geq\alpha/4$. Moving the parameter $t\in[0,1]$ we may assume that we have both $\dist(x_{n_1},z_{n_1})\geq\alpha/4$ and $\dist(z_{n_1},y_{n_1})\geq\alpha/4$. Taking a value $0<t\leq 1$, on account of the continuity of $F$ and $\gamma$, we can suppose that $\dist(x_{n_1},y_{n_1})=\alpha/2$ and that $\dist(x_{n_1},z_{n_1})=\alpha/4$ and $\dist(z_{n_1},y_{n_1})=\alpha/4$.
Again by Lemma \ref{uniforme} there are $0<n_2,\, n'_2<N$ and $F$-sequences such that $\dist(x_{n_1+n_2},z_{n_1+n_2})\geq\alpha/2$ while $\dist(x_j,z_j)<\alpha/2$ for every $0\leq j<n_1+n_2$ and
$\dist(y_{n_1+n'_2},z_{n_1+n'_2})\geq\alpha/2$ while $\dist(y_j,z_j)<\alpha/2$ for every $0\leq j<n_1+n'_2$. Moreover, $n_1+n_2<2N$ and $n_1+n'_2<2N$.
 As the reader can see the rest of the proof is similar to that of Theorem \ref{htop>0}, the quantity of points at a distance greater $\alpha/2$ for iterates bounded by $kN$ is greater than $2^k$ implying that $h_{top}(F)>0$.
\qed

\section{Invariant measures and Lyapunov exponents for set-valued maps}\label{s-Invariant-measure}
The goal of  this section is to introduce measurable set-valued maps, and invariant measures and finally to
define the Lyapunov exponent for set-valued maps.
\medbreak

Let $M$ be a compact $d$-dimensional manifold and $\mathcal{B}(M)$ the family of Borelian subsets of $M$.
A set-valued map $F:M\multimap M$, with $F(x)$ compact $\forall x\in M$,
is  {\em{measurable}} if for every open subset $A\subset M$ we have that $F^{-1}(A)\in \mathcal{B}(M)$. Equivalently
$F$ is  measurable if for every closed subset $C\subset M$ we have that $F^{-1}(C)\in \mathcal{B}(M)$. Here, if $C \subset M, \,\, F^{-1}(C)$, is the largest counter-image of $C$.

Recall that  given a  measure $\mu$ 
on $M$ and a single-valued measurable function $f:M\to M$, the measure $\mu$ is
$f$-invariant if for all Borelian subsets $B\subset M$ it holds that $\mu(f^{-1}(B))=\mu(B)$. This is no longer valid for set-valued maps.
The following notion of $F$-invariant measure, \cite{AFL,MA}, is adopted for u.s.c. set-valued maps.
\begin{Df}
Let $M$ be a compact manifold and
let $F:M\multimap M$ be an u.s.c. set-valued map. We say that $\mu$ is an invariant probability measure for $F$ if
given a Borel subset  $B\subset M$ it holds that $\mu(B)\leq \mu(F^{-1}(B))$.
In this case, the set-valued map $F$ defines a super stationary process, see \cite{Ab,Sch}.
\end{Df}

The existence of  an invariant measure for an u.s.c. set-valued map is guaranteed in our setting (see \cite[Theorem 8.9.4]{AF}, see also \cite{AFL,MA}).

Proposition \ref{compactos}  enables us to define the following subsets of $C(M)$:

For $n\geq 0$
 $$B^*_K(\delta,n)=\{A \in C(M)\setminus K\,:\, \dist_H(F^j(K),F^j(A))\leq\delta,\, \forall\, j=0,1,\ldots,n\}\, .$$
and for $n<0$
$$B^*_K(\delta,n)=\{A\in C(M)\setminus K\,:\, \dist_H(F^j(K),F^j(A))\leq\delta,\,
 \forall\, j=n,n+1,\ldots,-1,0\} .$$

\noindent For $n\in\Z$, $\delta>0$ and $x\in M$ let us define
\begin{equation}\label{Hdelta}
H_\delta(K,n)=\sup_{A\in B^*_K(\delta,n)} \left\{\frac{\dist_H(F^n(K),F^n(A))}{\dist_H(K,A)}\right\}
\end{equation}
and
\begin{equation}\label{hdelta}
h_\delta(K,n)=\inf_{A\in B^*_K(\delta,n)} \left\{\frac{\dist_H(F^n(K),F^n(A))}{\dist_H(K,A)}\right\}\, .
\end{equation}
Note that when $K=\{x\}$ and $A=\{y\}$, $\dist_H(\{x\},\{y\})=\dist(x,y)$.

\begin{Obs}
\begin{enumerate}
\item
If $K,A\subset C(M)$ and $K\neq A$ then there is a point $x\in K$ such that $x\notin A$ (so that $\dist(x, A)>0$)
or there is a point $y\notin K$ such that $y\in A$.
In any case
$\dist_H(K,A)>0$ and so $H_\delta(K,n)$ and $h_\delta(K,n)$ are well defined.
\item
For $x,y\in M,\, x\neq y$ it is possible to have that $F^n(x)=F^n(y)$ for some $n\in \N$.
And if this occurs then $F^{n+k}(x)=F^{n+k}(y)$ for every $k\geq 0$.
Similarly, if $F^{-n}(x)=F^{-n}(y)$ then $F^{-n-k}(x)=F^{-n-k}(y)$ for every $k\geq 0$.
 But if $F$ is expansive then either there is
 $n_0>0$ such that
$\dist_H(F^{n_0}(x),F^{n_0}(y))>\alpha$ or
there is $-n_1<0$ such that $\dist_H(F^{-n_1}(x),F^{-n_1}(y))>\alpha$.
 Thus either  for $0\leq j\leq n_0$
or for $-n_1\leq j\leq 0$, we cannot have $F^j(x)=F^j(y)$.
Moreover, if $F$ is continuous we also have that if $y\to x$ then the time $n$ needed to get that $\dist_H(F^n(x),F^n(y))>\alpha$ or  $\dist_H(F^{-n}(x),F^{-n}(y))>\alpha$ goes to infinity. Therefore, if  $\dist_H(\{x\},A)<\delta$, with $\delta>0$ small enough, we have
$\frac{\dist_H(F^n(\{x\}),F^n(A))}{\dist_H(\{x\},A)}>0$ and so
$H_\delta(x,n)=\sup_{A\in B^*_{\{x\}}(\delta,n)} \left\{\frac{\dist_H(F^n(\{x\}),F^n(A))}{\dist_H(\{x\},A)}\right\}>0$.
\item
Consider a pseudo Anosov diffeomorphism $f$ defined on a surface $S$ with 
genus greater or equal to 2.
Let $p$ be one singularity of $f$ and
 $x\in W^s_\epsilon(p)$.
 Now define $F:S\multimap S$ by $F(x)=Cc(B(f(x),r)\cap W^u_\epsilon(f(x)),f(x))$ where $r>0$ is small.
 Here $Cc(B(x,r)\cap X, x)$ is the connected component of $B(x,r)\cap X$ containing $x$.
Then $F$ is u.s.c. but cannot be continuous. Indeed, pick a point $x$ close to $p$ on one stable prong of $p$. Then no matter how near $p$ is {from} $x$
we have that  $\dist_H(F(x),F(p))>\epsilon/2$. See Figure \ref{f-unica}.

\begin{figure}[htb]
\begin{center}
\includegraphics[scale=0.38]{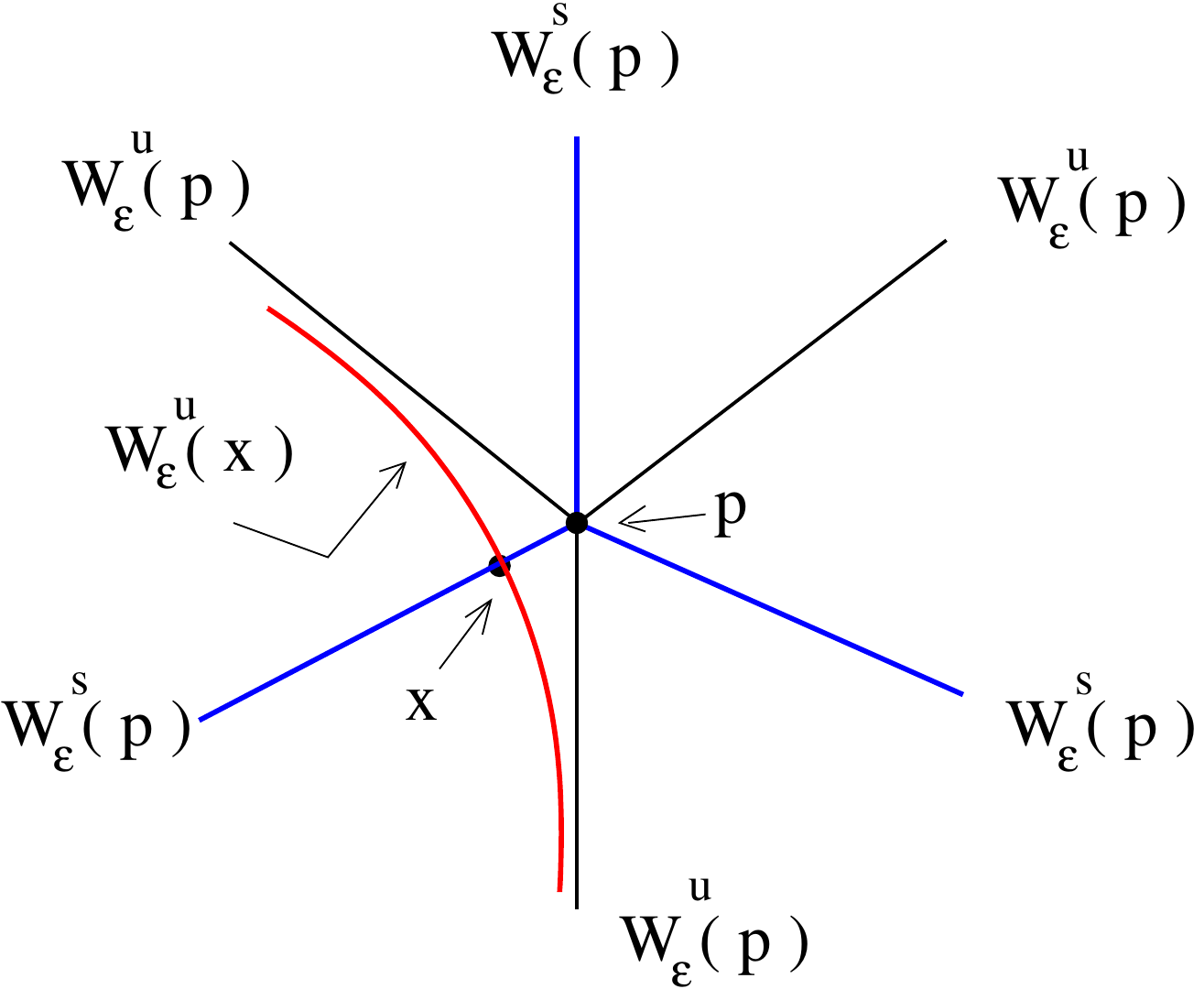}
\caption{$\dist_H(W^u_\epsilon(x),W^u_\epsilon(p))>\epsilon/2$}\label{f-unica}
\end{center}
\end{figure}

\end{enumerate}
\end{Obs}

\noindent {\em{ Notation.}}\/ For the case of the singleton $\{x\}$ we will write $x$ instead of the more cumbersome $\{x\}$.
We also will set $F^0(x)=\{x\}=x$ for every $x\in M$.
We will use the same notation (capital letters) to denote $F:C(M)\to C(M)$ the induced
(single-valued) map on compact subsets given by $F:M\multimap M$.
We think that this double use would not confuse.

\begin{Df}
 We say that $F:C(M)\to C(M)$ is Lipschitz continuous
  if there is a constant $k\geq 1$ such that  for every $K,A\in C(M)$
  it holds
  $$\dist_H(F(K),F(A))\leq k\,\dist_H(K,A)\,\,\mbox{ and }\,\,
 \dist_H(F^{-1}(K),F^{-1}(A))\leq k\,\dist_H(K,A).$$
 This in particular implies that $F:M\multimap M$ and $F^{-1}:M\multimap M$ are Lipschitz, i.e.,
 $$\dist_H(F(x),F(y))\leq k\, \dist_H(\{x\},\{y\})=k\,\dist(x,y)\,\,\mbox{and}$$
 $$dist_H(F^{-1}(x),F^{-1}(y))\leq k\, \dist_H(\{x\},\{y\})=k\,\dist(x,y).$$
\end{Df}

Using equation (\ref{Hdelta}) we define for $n>0$
$$\Lambda^+_\delta(x)=\limsup_{n\to+\infty}\frac{1}{n}\log (H_\delta(x,n)),$$
and  using equation (\ref{hdelta}) we define for $n<0$
$$\lambda^-_\delta(x)=-\limsup_{n\to-\infty}\frac{1}{n}\log (h_\delta(x,n))\, .$$

\begin{thm}\label{t-multi}
Let $M$ be a compact manifold and
 $F:M\to M$ a Lipschitz continuous set-valued map such that $F(x)\subset M$ is compact. Let $\mu$ be $F$ invariant.
Then for $x\in M$ $\mu$ a.e. it holds that
the limits

$$\Lambda^+_\delta(x)=\lim_{n\to+\infty}\frac{1}{n}\log (H_\delta(x,n))\, $$
 $$\lambda^-_\delta(x)=-\lim_{n\to-\infty}\frac{1}{n}\log (h_\delta(x,n))$$
do exist and are finite.
Moreover, $\Lambda^+_\delta(x)$ and $\lambda^-_\delta(x)$ are $F$-invariant $\mu$ a.e.
\end{thm}

\begin{proof}
By hypothesis,  $F$ is Lipschitz continuous and hence there is $k>0$ such that $\dist_H(F^{\pm 1}(K),F^{\pm 1}(A))\leq k\,\dist_H(K,A)$ for every $K,A\in C(M)$. Thus,  for all pairs of points $x,\, y \, \in M,\,\, x\neq y$, we have that
$$\frac{\dist_H(F(x),F(y))}{\dist(x,y)}\leq K .$$

\noindent
The last inequality implies that $\sup_{x,y\in M, x\neq y}\frac{\dist_H(F^n(x),F^n(y))}{\dist_H(x,y)}\leq K^{n}$ for all $n\in\N$,
and hence $\log(|H_\delta(x,n)|)\leq n\log(K)$ for all $\delta>0$, $x\in M$ and $n\in\N$. Therefore
\begin{equation} \label{ecu}
\sup_{n\in\N\backslash\{0\}}\frac{1}{n}\int_M|\log(H_\delta(x,n))|\mu(dx)<\infty\;.
\end{equation}

In case that $\dist_H(F^n(x),F^n(A))=0$ then $F^n(x)=F^n(A)$ and therefore  we have that
$\dist_H(F^{n+k}(x),F^{n+k}(A))=0$ too.
In this case, we set, by convention,
$$\left(\frac{\dist_H(F^{n+k}(x),F^{n+k}(A))}{\dist_H(F^n(x),F^n(A))}\right)=0.$$
This convention unifies the treatment given below.
It is easy to see that $B^*_{\{x\}}(\delta,n+k)\subset B^*_{\{x\}}(\delta,n)$ and  $B^*_{\{x\}}(\delta,n+k)\subset B^*_{\{F^n(x)\}}(\delta,k)$.
For, if $A$ is such that $\dist_H(F^j(x),F^j(A))\leq\delta,\, \forall\, j=0,1,\ldots,n+k$ then setting $B=F^n(A)$ we get
that $\dist_H(F^{n+j}(x),F^j(B))\leq\delta$, for every $ j=0,1,\ldots,k$. Thus $B^*_{\{x\}}(\delta,n+k)\subset B^*_{\{F^n(x)\}}(\delta,k)$.

\noindent
Hence
$$H_\delta(x,n+k)=\sup_{A\in B^*_{\{x\}}(\delta,n+k)} \{\dist_H(F^{n+k}(x),F^{n+k}(A))/\dist_H(\{x\},A)\}= $$
$$\sup_{A\in B^*_{\{x\}}(\delta,n+k)} \left\{\left(\frac{\dist_(F^{n}(x),F^{n}(A))}{\dist_H(\{x\},A)}\right) \left(\frac{\dist_H(F^{n+k}(x),F^{n+k}(A))}{\dist_H(F^n(x),F^n(A))}\right)\right\}\leq$$
$$\leq\sup_{A\in B^*_\{x\}(\delta,n)}\! \left\{\frac{\dist(F^n(x),F^n(A))}{\dist_H(\{x\},A)}\right\}\cdot \!\!\!\!\sup_{B\in B^*_{F^n(x)}(\delta,k)} \!\left\{\frac{\dist_H(F^{n+k}(x),F^k(B))}{\dist_H(F^n(x),B)}\right\}=$$
$$=H_\delta(x,n)\cdot H_\delta((F^n(x),k). $$
Now,  set $Y(\delta,x,n)=\log (H_\delta(x,n))$. Then we have that $Y(\delta,x,n+k)\leq Y(\delta,x,n)+Y(\delta,f^n(x),k)$, i.e., $Y(\delta,x,n)$ is subadditive. Moreover, since $\mu$ is $F$-invariant, $Y(\delta,x,n)$ is super stationary in the sense that $\mu(Y(\delta,F(x),n+1))\leq \mu(Y(\delta,x,n))$, $\mu$-a.e., \cite{MA,AFL}. Applying the generalization of Kingman's Subadditive Ergodic Theorem given at \cite{Ab} (see also \cite{Sch}),
 we conclude that the limit $\Lambda^+_\delta(x)=\lim_{n\to+\infty}\frac{1}{n}\log (H_\delta(x,n))$ exists $\mu$-a.e.. Equation (\ref{ecu}) guarantees that this limit is $<+\infty$.

 Similarly, as $h_\delta(x,n+k)\geq h_\delta(x,n)\cdot h_\delta(f^n(x),k)$ we get that $y(\delta,x,n)=-\log(h_\delta(x,n))$ is subadditive too which implies that the limit $\lambda^-_\delta(x)= - \lim_{n\to-\infty}\frac{1}{n}\log (h_\delta(x,n))$ exists.

 Since $F$ is $\mu$-invariant, the quantities $\Lambda^+_\delta(x)$ and $\Lambda^-_\delta(x)$ are also $F$-invariant.
 The proof of Theorem \ref{t-multi} is completed.

\end{proof}
Observing that $\Lambda^+_\delta(x)$ decreases and $\Lambda^-_\delta(x)$ increase when $\delta\to 0$ we define
\begin{Df}
The maximal and minimal Lyapunov exponents for $F$ at $x\in M$ are given by
$\chi^+(x)=\lim_{\delta\to 0}\Lambda^+_\delta(x)$ and $\chi^-(x)=\lim_{\delta\to 0}\Lambda^-_\delta(x)$ respectively.
\end{Df}

\medbreak

\section{Positive entropy for set-valued maps such that $\chi^+(x)>0$}\label{s-entropia-Lyapunov-exponent}

In this section, we prove that the topological entropy, as given by Definition \ref{topentropy}, of a set-valued map $F:M\multimap M$ is positive whenever
there is a non trivial continuum $\gamma\in M$ such that $\chi^+(x)>0$ for every $x\in\gamma$. 
Let us emphasize that $\chi^+(x)>0$ for an isolated point $x\in M$ solely cannot ensure that $h_{top}>0$. For,  
\cite[page 140]{Ka} exhibits a diffeomorphism $f$ of the two sphere such that $\chi^+(x)>0$ for all non-wandering points but $h_{top}(f)=0$. 
In that example 
$\Omega(f)$ is a finite set.

\vspace{0,2cm}

\noindent {\bf{Proof of Theorem \ref{Lyapexpo positivo}.}}\/
Since $\gamma$ is compact and $\chi^+(x)>0$ for every $x\in\gamma$ there is a positive lower bound $3s$ for $\chi^+(x)$, $x\in\gamma$. Let $\delta_0>0$ be such that $8\delta_0\leq \mbox{diam}(\gamma)$. There is $\delta_1>0$ such that for $\delta_0\geq \delta_1\geq \delta>0$ it holds that $\Lambda^+(x,\delta)>2s$.
Moreover, by Theorem \ref{t-multi} we have that
$$\lim_{n\to +\infty} \frac{1}{n} \log(H_\delta(x,n))=\Lambda^+(x,\delta)>2s $$
hence there are $n_0>0$ and $y\in \gamma$ such that $\dist(x,y)\leq\delta_1$ such that for all $n\geq n_0$
$$\frac{1}{n} \log(H_\delta(x,n))>s \Longrightarrow \dist_H(F^n(x),F^n(y))\geq e^{sn}\dist(x,y)\, .$$
Let $x_0,x_1\in\gamma$ be such that $\dist(x_0,x_1)\leq \delta_0$. Let $n_1>0$ be such that
$$\dist_H(F^{n_1}(x_0),F^{n_1}(x_1))\geq e^{n_1s}\dist(x_0,x_1)\geq 4\delta_0 \, $$
while $$\dist_H(F^{n}(x_0),F^{n}(x_1))<4\delta_0, \, \mbox{ for } 0\leq n<n_1\,.$$
Rename $x_0$ as $x_{00}$, $x_1$ as $x_{11}$ and find $x_{01}\in \gamma$ such that $\dist_H(F^n(x_{00}),F^n(x_{01}))<\delta_0$ for every $n:\,0\leq n\leq n_1$.
Similarly, find $x_{10}\in\gamma$ such that $\dist_H(F^n(x_{10}),F^n(x_{11}))<\delta_0$ for every $n:\,0\leq n\leq n_1$.
There is $n_2$ such that
$$\dist_H(F^{n_2}(x_{00}),F^{n_2}(x_{01}))\geq e^{n_2s}\dist(x_{00},x_{01})\geq 4\delta_0 \, $$ and
$$\dist_H(F^{n_{2}}(x_{10}),F^{n_2}(x_{11}))\geq e^{n_2s}\dist(x_{10},x_{11})\geq 4\delta_0 \, . $$
Therefore the points $x_{00},x_{01},x_{10},x_{11}$ are $(n_2,4\delta_0)$ separated.

Proceeding in this way, as we have done in Theorem \ref{htop>0},  we  find $2^k$ points $(n,4\delta_0)$ separated
$\{x_{00\ldots 00},x_{00\ldots 01},\ldots ,x_{11\ldots 11}\},$
with $n$ depending only on the distance among the points $x_{00\ldots 00},x_{00\ldots 01},\ldots ,x_{11\ldots 11}$.
From this,  we  conclude that $h_{top}(F)$ is positive whenever there is a nontrivial continuum $\gamma\subset M$ with the property that $\chi^+(x)>0$ for every $x\in\gamma$.
$\blacksquare$

\begin{tabbing}
Universidade Federal do Rio de Janeiro, \hspace{1cm}\= Facultad de Ingenieria, \kill
 M. J. Pacifico, \> J. L. Vieitez, \\
Instituto de Matematica, \> Instituto de Matematica,\\
Universidade Federal do Rio de Janeiro, \> Facultad de Ingenieria,\\
C. P. 68.530, CEP 21.945-970, \> Universidad de la Republica,\\
Rio de Janeiro, R. J. , Brazil. \> CC30, CP 11300,\\
                                \> Montevideo, Uruguay\\
{\it pacifico@im.ufrj.br} \> {\it jvieitez@fing.edu.uy}
\end{tabbing}

\end{document}